\newtheorem{theorem}{Theorem}
\theoremstyle{definition}
\newtheorem*{remark}{Remark}
\newtheorem{problem}{Problem}
\newcommand{\m}{\mathbf} 
\newcommand{\rd}{/}
\newcommand{\ld}{\backslash}
\newcommand{\jn}{\mathbin{\vee}}
\newcommand{\mt}{\mathbin{\wedge}}
\newcommand{\f}{\varphi}
\newcommand{\ps}{\psi}
\title{Residuated lattices do not have the amalgamation property}
\author[1]{Peter Jipsen}
\author[2]{Simon Santschi}
\affil[1]{Faculty of Mathematics, Chapman University, Orange, CA, USA}
\affil[2]{Mathematical Institute, University of Bern, Bern, Switzerland}
\date{\vspace{-5ex}}
\begin{document}
\maketitle
\begin{abstract}
We show that the variety of residuated lattices does not have the amalgamation property, thereby settling a long-standing open problem. In addition, we show that the amalgamation property fails for several subvarieties, including idempotent residuated lattices, involutive residuated lattices, and (integral)  distributive residuated lattices.
\end{abstract}
A \emph{residuated lattice} is an algebraic structure $\langle R,\mt, \jn, \cdot,\ld,\rd,1\rangle$ such that 
    $\langle R,\cdot,1 \rangle$ is a monoid, 
    $\langle R, \mt, \jn \rangle$ is a lattice,
    and for all $a,b,c\in R$
    \[
    a\cdot b \leq c \iff b\leq a\ld c \iff a \leq c \rd b.
    \]
Residuated lattices form a variety (or equivalently an equational class), i.e., the above equivalence can be expressed using equations only.
A \emph{pointed residuated lattice} is an algebraic structure $\langle R,\mt, \jn, \cdot,\ld,\rd,1,0\rangle$ such that the reduct $\langle R,\mt, \jn, \cdot,\ld,\rd,1\rangle$ is a residuated lattice and $0$ is a constant. An \emph{involutive} residuated lattice is a pointed residuated lattice that satisfies the equation $0\rd (x\ld 0) \approx  (0\rd x)\ld 0 \approx x$; it is called \emph{cyclic} if it satisfies $0\rd x \approx x\ld 0$ and it is called \emph{odd} if $1=0$. 
An element $a$ of a residuated lattice is called \emph{central} if it commutes with every other element and \emph{idempotent} if $a\cdot a = a$. 

A  residuated lattice is called \emph{commutative/idempotent} if its monoid reduct is commutative/idempotent, \emph{integral} if it satisfies $x\leq 1$, and \emph{distributive} if its lattice reduct is distributive. 
Pointed residuated lattices provide the algebraic semantics for substructural logics, that is, the axiomatic extensions of the Full Lambek Calculus (FL).
For more details about residuated lattices and substructural logic we refer to \cite{GJKO07,MPT23}. 

A class $\mathsf{K}$ of algebraic structures is said to have the \emph{amalgamation property} (AP) if for each pair of embeddings $\f_B \colon \m{A} \to \m{B}$, $\f_C \colon \m{A} \to \m{C}$, with $\m{A},\m{B},\m{C} \in \mathsf{K}$, there exists an algebraic structure $\m{D} \in \mathsf{K}$ and embeddings $\ps_B \colon \m{B} \to \m{D}$ and $\ps_C \colon \m{C} \to \m{D}$, such that $\ps_B \circ \f_B = \ps_C\circ \f_C$, i.e., the following diagram commutes
\[\begin{tikzcd}
	& {\bf B} \\
	{\bf A} && {\bf D} \\
	& {\bf C}
	\arrow["{\f_B}", hook, from=2-1, to=1-2]
	\arrow["{\ps_B}", dashed, hook, from=1-2, to=2-3]
	\arrow["{\f_C}"', hook, from=2-1, to=3-2]
	\arrow["{\ps_C}"', dashed, hook, from=3-2, to=2-3]
\end{tikzcd}\]
The pair $\langle \f_B,\f_C\rangle$ is called a \emph{span} and the pair $\langle \ps_B,\ps_C \rangle$ is called an \emph{amalgam} of the span $\langle \f_B,\f_C\rangle$ in $\mathsf{K}$. If, furthermore, $(\ps_B\circ\f_B)[A] = \ps_B[B] \cap \ps_C[C]$, then $\langle \ps_B,\ps_C \rangle$ is called a \emph{strong amalgam}; and if every span in $\mathsf{K}$ can be completed with a strong amalgam we say that $\mathsf{K}$ has the \emph{strong amalgamation property}.

It was a long-standing open problem whether the variety of residuated lattices has the amalgamation property; see, e.g., \cite{Takamura04} and \cite{MPT10}. Even though there has been a lot of progress recently in establishing the AP or the failure thereof for various subvarieties of residuated lattices (see, e.g., \cite{FMS2023,FS2024}) the most general case remained open. We close this gap, establishing the failure of the AP for the variety of residuated lattices. Additionally, we resolve the same question for various other varieties, e.g., the varieties of idempotent and involutive residuated lattices.

\begin{table}
\centering
\begin{tabular}{lcc}
&Central & Non-central \\
Idempotent & 
\begin{tikzpicture}
\draw[draw = black,fill= black, minimum size = 6pt, inner sep = 0pt] circle(3pt);
\end{tikzpicture}
 &
\tikz[]\fill[black] (0,0) rectangle (0.225,0.225);
  \\
Non-idempotent 
&
\begin{tikzpicture}
\draw[draw = black,fill= white, minimum size = 6pt, inner sep = 0pt] circle(3pt);
\end{tikzpicture}
 & 
 \tikz[]\draw[black] (0,0) rectangle (0.225,0.225);\\
 & & \\
\end{tabular}
\caption{Convention for nodes in Hasse diagrams}
\label{tab:convention-hasse}
\end{table}

In what follows we will show the failure of the AP for the different varieties by considering spans of finite residuated lattices for which we will give Hasse diagrams. As the residuals are uniquely determined by the product and the order, the labels of the Hasse diagrams will only contain enough information about the product to uniquely determine it modulo the axioms of residuated lattices. To simplify the presentation we will use the following convention (see Table~\ref{tab:convention-hasse}): central elements are labeled with round nodes and non-central elements with square nodes; a filled node indicates that an element is idempotent.

\begin{figure}
\centering
\begin{tikzpicture}[
fbul/.style={circle,draw=black,fill=black, minimum size = 4pt, inner sep = 0pt},
fcirc/.style={circle,draw=black, minimum size = 4pt, inner sep = 0pt},
square/.style={regular polygon,regular polygon sides=4},
fsqu/.style={square,draw=black,fill=black, minimum size = 5.5pt, inner sep = 0pt}
]
  \node[fbul] (top) at (0,-2.4) {};
  \node[fbul] (1) at (0,-3.2) {};
  \node[fbul] (bot) at (0,-4) {};
   
   \node[right] () at (top) {$\top$};
   \node[right] () at (1) {$1$};
   \node[right] () at (bot) {$\bot$};

   \node[below] (A) at (0,-4.5) {$\m A$};

  \draw  (top) -- (1) -- (bot);
\end{tikzpicture}
\qquad
\begin{tikzpicture}[
fbul/.style={circle,draw=black,fill=black, minimum size = 4pt, inner sep = 0pt},
fcirc/.style={circle,draw=black, minimum size = 4pt, inner sep = 0pt},
square/.style={regular polygon,regular polygon sides=4},
fsqu/.style={square,draw=black,fill=black, minimum size = 5.5pt, inner sep = 0pt}
]
   \node[fsqu] (top) at (0,-2.4) {};
   \node[fbul] (a) at (0,-3.2) {};
   \node[fsqu] (b) at (0.7,-4) {};
   \node[fbul] (1) at (-0.7,-4) {};
   \node[fbul] (bot) at (0,-4.8) {};
   
   \node[right] () at (top) {$\top = \top b$};
   \node[right] () at (a) {$a$};
   \node[right] () at (b) {$\arraycolsep1.3pt\begin{array}{ll} b &= b\top \\
   &= ba \end{array}$};
   \node[left] () at (1) {$1$};
   \node[right] () at (bot) {$\bot$};

   \node[below] (B) at (0,-5.3) {$\m B$};
  
  \draw  (top) -- (a) -- (1) -- (bot) -- (b) -- (a);
\end{tikzpicture}
\begin{tikzpicture}[
fbul/.style={circle,draw=black,fill=black, minimum size = 4pt, inner sep = 0pt},
fcirc/.style={circle,draw=black, minimum size = 4pt, inner sep = 0pt},
square/.style={regular polygon,regular polygon sides=4},
fsqu/.style={square,draw=black,fill=black, minimum size = 5.5pt, inner sep = 0pt}
]
  \node[fbul] (top) at (0,-3.2) {};
  \node[fbul] (c) at (-0.7,-4) {};
   \node[fbul] (d) at (0.7,-4) {};
  \node[fbul] (1) at (0,-4) {};
  \node[fbul] (bot) at (0,-4.8) {};
   
   \node[right] () at (top) {$\top$};
   \node[right] () at (d) {$d = d\top$};
   \node[left] () at (c) {$c = c\top$};
   \node[left] () at (1) {$1$};
   \node[right] () at (bot) {$\bot=cd$};
   
   \node[below] (C) at (0,-5.3) {$\m C$};

  \draw  (top) -- (c) -- (bot) -- (d) -- (top) -- (1) -- (bot);
\end{tikzpicture}

\caption{
Counterexample for the AP in (idempotent) residuated lattices (Theorem~\ref{thm}).
}
\label{fig}
\end{figure}

\begin{theorem}\label{thm}
    The varieties of residuated lattices and idempotent residuated lattices do not have the amalgamation property. 
\end{theorem}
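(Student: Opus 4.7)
The plan is to prove failure of the amalgamation property by exhibiting the span in Figure~\ref{fig} and showing it has no amalgam in the variety of residuated lattices. This will settle both varieties in one stroke: every node in $\m B$ and $\m C$ is filled, so these algebras are idempotent, and since idempotent residuated lattices form a subvariety, any amalgam available in the smaller variety would also be an amalgam in the larger one. Hence a single nonexistence-of-amalgam argument for the (larger) variety of residuated lattices transfers down to the idempotent subvariety.

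I would proceed in two steps. The first, preparatory step is to verify that $\m A$, $\m B$, $\m C$ are genuine (idempotent) residuated lattices and that the obvious inclusions $\m A \hookrightarrow \m B$ and $\m A \hookrightarrow \m C$ are embeddings. The diagram convention, together with the labeled products in Figure~\ref{fig} and the RL axioms, uniquely determines the remaining multiplication entries; one then checks associativity and the existence of residuals $\ld$, $\rd$ for every pair. The inclusions are embeddings because the shared chain $\bot < 1 < \top$ is preserved, with all three elements central idempotents in each algebra.

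The decisive second step is the amalgamation argument. Assume for contradiction that $\m D$ together with embeddings $\ps_B, \ps_C$ is an amalgam of the span. Identifying elements with their images, both $\m B$ and $\m C$ sit inside $\m D$ and agree on $\m A$. From $1 \leq b \leq \top$ (in $\m B$), monotonicity of $\cdot$, and the identities $\top\cdot c = c$, $\top\cdot d = d$ (which hold in $\m C$ by centrality of $c,d$ combined with $c\top = c$, $d\top = d$), one gets $b\cdot c = c$ and $b\cdot d = d$ in $\m D$. Since $c\jn d = \top$ in $\m C$, applying left-distributivity of product over join (a theorem of every residuated lattice) yields
\[
b\cdot \top \;=\; b\cdot(c\jn d) \;=\; (b\cdot c) \jn (b\cdot d) \;=\; c\jn d \;=\; \top,
\]
whereas $\m B$ supplies $b\cdot \top = b$. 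Therefore $b = \top$ in $\m D$, contradicting injectivity of $\ps_B$.

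The main obstacle is the routine but unavoidable verification that $\m B$ and $\m C$ are residuated lattices at all, i.e., checking associativity and residuation once the full product tables have been reconstructed from the diagram. The amalgamation argument itself is short and purely equational; its only conceptual content is the observation that the element $\top \in \m A$ can be recovered as $c\jn d$ on the $\m C$-side while satisfying the asymmetric absorption $b\top = b$ on the $\m B$-side, which is incompatible with the distributivity forced by residuation.
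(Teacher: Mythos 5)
Your overall strategy matches the paper's: use the span of Figure~\ref{fig}, derive an equational contradiction in a putative amalgam $\m D$, and observe that since the three algebras are idempotent the failure transfers to the subvariety of idempotent residuated lattices. The setup and that reduction are fine. But the decisive step is broken: you assert $1 \leq b$ in $\m B$, and in fact $1$ and $b$ are incomparable there (they are distinct atoms above $\bot$, both covered by $a$). Worse, $1 \leq b$ is inconsistent with the structure of $\m B$: together with $b\top = b$ it would give $\top = 1\cdot\top \leq b\top = b$, i.e.\ $b = \top$, contradicting $b < \top$ already inside $\m B$. Without the lower bound $1 \leq b$, monotonicity only yields $bc \leq \top c = c$ and $bd \leq \top d = d$, and your displayed computation then gives $b = b(c \jn d) = bc \jn bd \leq c \jn d = \top$, which is no contradiction at all.

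The equalities $bc = c$ and $bd = d$ are not available by a one-line monotonicity argument; note that the figure only supplies $\top b = \top$ (with $b$ on the \emph{right}), while $b\top = b$, so the two sides of the product behave very differently and the order of the factors is essential. The paper's proof has to work harder: it first derives $cb = c$ from $c = c\top$ and $\top b = \top$; then uses $\top = 1 \jn c$ and $cd = \bot$ to get $d = \top b d = bd \jn cbd = bd \jn cd = bd$; then $b = b(1 \jn d) = b \jn d$, so $d \leq b \leq a$; and finally $\top = 1 \jn d \leq a$, contradicting $a < \top$. You would need to replace your second step with a derivation of this kind. (A further small slip: $\top$ is not central in $\m B$ --- it is a square node --- though nothing in your argument actually depends on that claim.)
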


\begin{proof}
Consider the residuated lattices $\m{A}$, $\m{B}$ and $\m{C}$ depicted in Figure~\ref{fig}. 
It is straightforward to check that the inclusion maps $\m{A} \hookrightarrow \m{B}$ and $\m{A}\hookrightarrow \m{C}$ are embeddings, i.e., they form a span in the variety of residuated lattices. So assume for a contradiction that the span has an amalgam in the variety of residuated lattices. Then, since $1 \jn a = 1 \jn b = a < \top$ in $\m{B}$ and  $1\jn c = 1\jn d = \top$ in $\m{C}$, we may assume that $A = B \cap C$ and there exists a residuated lattice $\m{D}$ such that $\m{B}\leq \m{D}$ and $\m{C} \leq \m{D}$.
First, since $c = c\top$ and $\top b = \top$, in $\m{D}$ we have 
\begin{equation}\label{eq1}
    cb = c\top b = c\top = c. \tag{$\ast$}
\end{equation}
Moreover, using that $\top = 1 \jn c$ and $cd = \bot$, we get
\begin{equation*}
    d = \top d = \top b d = (1 \jn c)bd = bd \jn cbd \overset{\eqref{eq1}}{=} bd \jn cd = bd \jn \bot = bd,
\end{equation*}
where for the last equality we use that $\bot \leq d$ implies $\bot = b\bot \leq bd$. But then also 
\begin{equation*}
    b = b\top = b(1\jn d) = b\jn bd = b \jn d,
\end{equation*}
yielding $d \leq b \leq a$. Hence, since $1\leq a$, we obtain
\begin{equation*}
    \top = 1 \jn d \leq a \jn d = a,
\end{equation*}
contradicting the fact that $a < \top$. Therefore the variety of residuated lattices does not have the amalgamation property. In particular, the residuated lattices $\m{A}$, $\m{B}$, and $\m{C}$ are idempotent, hence also the variety of idempotent residuated lattices does not have the amalgamation property.
\end{proof}

 It is shown in \cite[Corollary 5.2]{GJS2025} that every cyclic involutive idempotent residuated lattice is commutative. Since commutative involutive residuated lattices have the AP \cite{GJKO07}, we need an additional counterexample for showing the next theorem.

\begin{theorem}\label{thm2}
    The variety of (odd) (cyclic) involutive residuated lattices does not have the amalgamation property.
\end{theorem}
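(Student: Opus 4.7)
The plan is to exhibit a single span $\m{A}\hookrightarrow\m{B}$, $\m{A}\hookrightarrow\m{C}$ of finite odd cyclic involutive residuated lattices that admits no amalgam in the variety of involutive residuated lattices. Since an amalgam inside a subvariety is also an amalgam in the ambient variety, one counterexample of this form simultaneously refutes the AP for all four varieties named in the statement: involutive, cyclic involutive, odd involutive, and odd cyclic involutive residuated lattices.

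By the remark preceding the theorem, the algebras of Figure~\ref{fig} cannot simply be recycled: they are idempotent, and every cyclic involutive idempotent residuated lattice is commutative, where the AP is known to hold. Hence at least one of $\m B$ or $\m C$ must contain a non-idempotent element. At the same time, the monoid structure must support a dualizing constant $0$; in the odd cyclic case this forces $0=1$, and the map $x\mapsto x\ld 1 = 1/x$ must be an order-reversing involution with $-(-x)=x$.

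The contradiction will follow the blueprint of Theorem~\ref{thm}: assuming an involutive residuated lattice $\m D$ contains $\m B$ and $\m C$ with $A = B\cap C$, I would chain a short sequence of equalities in $\m D$ using (i) product relations in $\m B$ of the form $c\top = c$ and $\top b = \top$, (ii) a join decomposition of the form $\top = 1\jn c$ in $\m C$, and (iii) distributivity of multiplication over joins, in order to collapse an element of $\m A$ strictly below $\top$ onto $\top$. The involutive setting provides an extra lever: every multiplicative identity used in the chase is paired with its negation-dual, and this symmetry can be built into the design to block escape routes that the involution might otherwise open up (for instance, by ensuring that the dual identities are already forced by product relations visible inside $\m B$ and $\m C$).

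The main obstacle is the construction itself: one must simultaneously satisfy (a) involutivity, with the negation a well-defined order-reversing bijection, (b) non-idempotence of a key element, to bypass the commutativity obstruction of the remark, and (c) the forcing mechanism of Theorem~\ref{thm}, with enough multiplicative rigidity in $\m B$ and $\m C$ to propagate to any $\m D$. Verifying associativity and residuation on the proposed algebras is routine but tedious; I would expect the authors to present the counterexample via Hasse diagrams in the convention of Table~\ref{tab:convention-hasse}, annotating only those product values needed to determine the residuals, and then to spell out the crucial chain of equalities in $\m D$ in a short calculation modeled directly on the derivation of $\eqref{eq1}$ and its consequences in the proof of Theorem~\ref{thm}.
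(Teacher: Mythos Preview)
Your proposal is not a proof; it is a description of the \emph{shape} a proof would take, with the crucial ingredient --- the actual span --- explicitly deferred. You write that ``the main obstacle is the construction itself'' and that you ``would expect the authors to present the counterexample via Hasse diagrams,'' which is an admission that the content of the argument is missing. For a result of this kind, the construction \emph{is} the proof: once the three algebras are on the table and verified to be odd cyclic involutive, the equational chase is a handful of lines. Without the algebras there is nothing to check.

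Concretely, what the paper supplies that you do not: $\m{A}$ is the three-element chain $\bot<1<\top$; $\m{B}$ is a four-element diamond with a central non-idempotent element $a$ satisfying $a^2=1$ and $a\top=\top$; $\m{C}$ is a seven-element algebra containing non-central idempotents $b,c$ with $b\jn c$ covered by $\top$, $b\jn 1 = c\jn 1 = b\jn c$, $\top b=b$, $c\top=c$, $b\top=\top c=\top$, $bc=\bot$, and a non-idempotent element $cb$ below $1$. The equational chase is also not quite the template you sketched from Theorem~\ref{thm}: the paper first shows $cb\jn acb=b$ via $(1\jn a)cb=\top cb=\top b=b$, uses this to force $c\jn a=\top$, then obtains $ba=b(c\jn a)=b\top=\top$ and finally $b=baa=\top a=\top$, a contradiction. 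Note the pivotal role of $a^2=1$ in the last step --- this is where the non-idempotent element you correctly anticipated must appear, but its specific form (an involution in the monoid) is not predictable from your outline.

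Your strategic remarks are sound --- the need for non-idempotence, the reduction to a single odd cyclic span, the use of join decompositions of $\top$ --- but they do not pin down the construction, and several details you guessed (e.g.\ placing $c\top=c$ and $\top b=\top$ in $\m{B}$, or a join $\top=1\jn c$ in $\m{C}$) do not match what actually works.
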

\begin{proof}
Consider the algebras $\m{A}$, $\m{B}$, and $\m{C}$ depicted in Figure~\ref{fig2}. They are odd cyclic involutive residuated lattices. It is straightforward to check that the inclusion maps $\m{A} \hookrightarrow \m{B}$ and $\m{A} \hookrightarrow \m{C}$ are embeddings, so they form a span in the variety of involutive residuated lattices. Assume for a contradiction that this span has an amalgam in the class of involutive residuated lattices. Since $a^2 = 1 > \bot$ while $(cb)^2 = \bot$ and all other elements of $\m{C}$ are idempotent, we may assume that $A = B\cap C$ and that there exists an involutive residuated lattice $\m{D}$ such that $\m{B} \leq \m{D}$ and $\m{C} \leq \m{D}$. Then in $\m{D}$ we have
    \begin{equation}\label{eq2}
     cb \jn acb = (1 \jn a)cb = \top cb = \top b = b,\tag{$\ast\ast$}
    \end{equation}
    where the second to last equality follows from $\top c = \top$.
    Since $cb \leq  c$ and $cb \leq 1$, we get $cb \jn acb \leq c \jn a$, yielding 
    \[
    c \jn a = (cb \jn acb) \jn c \jn a \overset{\eqref{eq2}}{=} b \jn c \jn a = b \jn 1 \jn a = b \jn \top = \top,
    \]
    where the second to last equality follows from $b \jn c = b \jn 1$.
    From $bc = \bot$ and the previous chain of equalities, it follows that
    \[
    ba = bc \jn ba = b(c \jn a) = b\top = \top.
    \]
    Hence
    \[
    b = baa = \top a = \top,
    \]
    a contradiction.
\end{proof}

\begin{figure}
\centering
\begin{tikzpicture}[
fbul/.style={circle,draw=black,fill=black, minimum size = 4pt, inner sep = 0pt},
fcirc/.style={circle,draw=black, minimum size = 4pt, inner sep = 0pt},
square/.style={regular polygon,regular polygon sides=4},
fsqu/.style={square,draw=black,fill=black, minimum size = 5.5pt, inner sep = 0pt}
]
  \node[fbul] (top) at (0,-2.4) {};
  \node[fbul] (1) at (0,-3.2) {};
  \node[fbul] (bot) at (0,-4) {};
   
   \node[right] () at (top) {$\top$};
   \node[right] () at (1) {$1$};
   \node[right] () at (bot) {$\bot$};

   \node[below] (A) at (0,-4.5) {$\m A$};
  
  \draw  (top) -- (1) -- (bot);
\end{tikzpicture}
\quad
\begin{tikzpicture}[
fbul/.style={circle,draw=black,fill=black, minimum size = 4pt, inner sep = 0pt},
fcirc/.style={circle,draw=black, minimum size = 4pt, inner sep = 0pt},
square/.style={regular polygon,regular polygon sides=4},
fsqu/.style={square,draw=black,fill=black, minimum size = 5.5pt, inner sep = 0pt}
]
  \node[fbul] (top) at (0,-3.2) {};
  \node[fcirc] (a) at (0.7,-4) {};
  \node[fbul] (1) at (-0.7,-4) {};
  \node[fbul] (bot) at (0,-4.8) {};
   
   \node[right] () at (top) {$\top = a \top$};
   \node[right] () at (a) {$a$};
   \node[left] () at (1) {$1=a^2$};
   \node[right] () at (bot) {$\bot $};

   \node[below] (B) at (0,-5.3) {$\m B$};

  \draw  (top) -- (a) -- (bot) -- (1) -- (top);
\end{tikzpicture}
\ \ 
\begin{tikzpicture}[
fbul/.style={circle,draw=black,fill=black, minimum size = 4pt, inner sep = 0pt},
fcirc/.style={circle,draw=black, minimum size = 4pt, inner sep = 0pt},
square/.style={regular polygon,regular polygon sides=4},
fsqu/.style={square,draw=black,fill=black, minimum size = 5.5pt, inner sep = 0pt},
squ/.style={square,draw=black,fill=white, minimum size = 5.5pt, inner sep = 0pt}
]
  \node[fsqu] (top) at (0,-2.4) {};
    \node[fbul] (e) at (0,-3.2) {};
  \node[fsqu] (b) at (-0.7,-4) {};
   \node[fsqu] (c) at (0.7,-4) {};
  \node[fbul] (1) at (0,-4) {};
  \node[squ] (d) at (0,-4.8) {};
  \node[fbul] (bot) at (0,-5.6) {};

   \node[right] () at (top) {$\top = b\top = \top c$};
    \node[right] () at (e) {$b\jn c$};
   \node[right] () at (c) {$c = c\top$};
   \node[left] () at (b) {$b = \top b$};
   \node[left] () at (1) {$1$};
   \node[right] () at (d) {$cb$};
   \node[right] () at (bot) {$\bot=bc$};

   \node[below] (C) at (0,-6.1) {$\m C$};

  \draw  (top) -- (e) -- (b) -- (d) -- (c) -- (e) -- (1) -- (d) -- (bot);
\end{tikzpicture}
\caption{
Counterexample for the AP in involutive residuated lattices (Theorem~\ref{thm2}). 
}
\label{fig2}
\end{figure}

\begin{remark}
  The proofs of Theorem~\ref{thm} and Theorem~\ref{thm2} also show that the amalgamation property already fails 
    for the $\{\ld,\rd\}$-free subreducts of residuated lattices, i.e., for lattice-ordered monoids (without the assumption of fusion distributing over meets). Moreover, the proofs do not depend on the meet operation or the constant $1$ being in the signature, so it also follows that the corresponding varieties of  residuated lattice-ordered semigroups,  lattice-ordered semigroups, residuated join-semilattice-ordered semigroups, and join-semilattice-ordered semigroups do not have the amalgamation property.
\end{remark}

The proof of the next theorem follows the strategy used to show that distributive lattices fail the strong amalgamation property, relying on the fact that relative complements are unique in distributive lattices; see, e.g., \cite{Graetzer87}.
\begin{figure}
    \centering
    
\begin{tikzpicture}[
fbul/.style={circle,draw=black,fill=black, minimum size = 4pt, inner sep = 0pt},
fcirc/.style={circle,draw=black, minimum size = 4pt, inner sep = 0pt}
]
  \node[fbul] (1) at (0,3.2) {};
  \node[fcirc] (a) at (0,2.4) {};
  \node[fcirc] (b) at (0,1.6) {};
  \node[fcirc] (c) at (0,0.8) {};
  \node[fbul] (bot) at (0,0) {};
   
   \node[right] () at (1) {$1$};
   \node[right] () at (a) {$a$};
   \node[right] () at (b) {$b$};
   \node[right] () at (c) {$c = ab = a^2$};
   \node[right] () at (bot) {$\bot = b^2 = ac$};

   \node[below] (A) at (0,-0.5) {$\m A$};
   
  \draw  (1) -- (a) -- (b) -- (c) -- (bot);
\end{tikzpicture}
\
\begin{tikzpicture}[
fbul/.style={circle,draw=black,fill=black, minimum size = 4pt, inner sep = 0pt},
fcirc/.style={circle,draw=black, minimum size = 4pt, inner sep = 0pt}
]
  \node[fbul] (1) at (0,3.2) {};
  \node[fcirc] (a) at (0,2.4) {};
  \node[fcirc] (b) at (-0.7,1.6) {};
  \node[fcirc] (p) at (0.7,1.6) {};
  \node[fcirc] (c) at (0,0.8) {};
  \node[fbul] (bot) at (0,0) {};
   
   \node[right] () at (1) {$1$};
   \node[right] () at (a) {$a$};
   \node[left] () at (b) {$b$};
   \node[right] () at (p) {$p$};
   \node[right] () at (c) {$c = bp = ab = a^2$};
   \node[right] () at (bot) {$\bot = p^2 = b^2 = ac$};

   \node[below] (B) at (0,-0.5) {$\m B$};
  
  \draw  (1) -- (a) -- (b) -- (c) -- (bot);
  \draw  (a) -- (p) -- (c);
\end{tikzpicture}
\
\begin{tikzpicture}[
fbul/.style={circle,draw=black,fill=black, minimum size = 4pt, inner sep = 0pt},
fcirc/.style={circle,draw=black, minimum size = 4pt, inner sep = 0pt}
]
  \node[fbul] (1) at (0,3.2) {};
  \node[fcirc] (a) at (0,2.4) {};
  \node[fcirc] (b) at (-0.7,1.6) {};
  \node[fcirc] (q) at (0.7,1.6) {};
  \node[fcirc] (c) at (0,0.8) {};
  \node[fbul] (bot) at (0,0) {};
   
   \node[right] () at (1) {$1$};
   \node[right] () at (a) {$a$};
   \node[left] () at (b) {$b$};
   \node[right] () at (q) {$q$};
   \node[right] () at (c) {$c = q^2 = bq = ab = a^2$};
   \node[right] () at (bot) {$\bot = b^2 = ac$};

   \node[below] (C) at (0,-0.5) {$\m C$};
  
  \draw  (1) -- (a) -- (b) -- (c) -- (bot);
  \draw  (a) -- (q) -- (c);
\end{tikzpicture}
    
    \caption{
    Counterexample for the AP in (integral) distributive residuated lattices
    (Theorem~\ref{thm3}).}
    \label{fig3}
\end{figure}
\begin{theorem}\label{thm3}
    The variety of (commutative) (integral) distributive residuated lattices does not have the amalgamation property.
\end{theorem}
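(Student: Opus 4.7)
The plan is to follow the hint stated before the theorem and exploit uniqueness of relative complements in distributive lattices. Looking at Figure~\ref{fig3}, the algebras $\m{B}$ and $\m{C}$ are each obtained from $\m{A}$ by inserting a single element ($p$ or $q$) into the interval $[c,a]$, incomparable with $b$; in both cases the new element and $b$ are mutual complements in that interval. What distinguishes $p$ from $q$ is purely multiplicative: $p^2 = \bot$ in $\m{B}$ while $q^2 = c$ in $\m{C}$.

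After the routine verification that $\m{A}, \m{B}, \m{C}$ are commutative, integral, and distributive residuated lattices, and that the two inclusion maps $\m{A} \hookrightarrow \m{B}$ and $\m{A} \hookrightarrow \m{C}$ are embeddings, I would assume for contradiction that the span admits an amalgam $\ps_B \colon \m{B}\to \m{D}$, $\ps_C\colon \m{C}\to \m{D}$ in the variety of distributive residuated lattices. Since $\ps_B$ and $\ps_C$ agree on $\m{A}$, the images of the common elements $1,a,b,c,\bot$ coincide, and I will write them as $1,a,b,c,\bot$ in $\m{D}$ as well. Set $p' := \ps_B(p)$ and $q' := \ps_C(q)$. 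The lattice identities $p\jn b = a$, $p\mt b = c$ in $\m{B}$ and $q\jn b = a$, $q\mt b = c$ in $\m{C}$ are preserved, so $p'$ and $q'$ are both relative complements of $b$ in the interval $[c,a]$ of $\m{D}$. As $\m{D}$ is distributive, relative complements are unique, giving $p' = q'$.

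Squaring this equality in $\m{D}$ yields $\ps_B(\bot) = (p')^2 = (q')^2 = \ps_C(c)$, so $\bot$ and $c$ have the same image in $\m{D}$. But $\bot < c$ already in $\m{A}$, and the restriction of $\ps_B$ (equivalently $\ps_C$) to $\m{A}$ is an embedding, hence injective, a contradiction. The same witnesses handle all the parameterized variants in the statement, since $\m{A}, \m{B}, \m{C}$ are already commutative and integral. The main conceptual move is the one-line distributive-lattice argument forcing $p' = q'$; I expect the only mildly delicate step to be confirming that the labelled Hasse diagrams actually determine residuated lattice structures, i.e., that the partial product data extends consistently to an associative, commutative, order-preserving monoid operation on each algebra and that the induced multiplication distributes over finite joins (so that residuals exist).
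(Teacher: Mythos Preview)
Your proposal is correct and matches the paper's proof essentially line for line: the same span from Figure~\ref{fig3}, the same use of uniqueness of relative complements in the distributive interval $[c,a]$ to force $\ps_B(p)=\ps_C(q)$, and the same multiplicative distinction $p^2=\bot$ versus $q^2=c$ to derive the contradiction. The only cosmetic difference is that the paper records $\ps_B(p)\neq\ps_C(q)$ up front and then contradicts it, whereas you first force equality and then contradict injectivity on $\bot<c$; these are the same argument.
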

\begin{proof}
    Consider the residuated lattices $\m{A}$, $\m{B}$, and $\m{C}$ depicted in Figure~\ref{fig3}. It is straightforward to check that the inclusion maps $\varphi_B \colon \m{A} \hookrightarrow \m{B}$, and $\varphi_C \colon \m{A}\hookrightarrow \m{C}$ form a span in the variety of distributive residuated lattices. Suppose for a contradiction that the span has an amalgam  in the variety of distributive residuated lattices, i.e., there exists a distributive residuated lattice $\m D$ and embeddings $\psi_B\colon \m{B} \to \m{D}$ and $\psi_C \colon \m{C} \to\m{D}$ such that $\psi_B\circ\varphi_B = \psi_C\circ\varphi_C$. Since $p^2 = \bot$ in $\m{B}$ and $q^2 = c$ in $\m{C}$ we must have $\psi_B(p) \neq \psi_C(q)$, But, since $b \jn p = a$, $b \mt p = c$ and $b\jn q = a$, $b\mt q =c$, both $\psi_B(p)$ and $\psi_C(q)$ are relative complements of $\psi_B(b) = \psi_C(b)$ in the interval $[\psi_B(c), \psi_B(a)] = [\psi_C(c), \psi_C(a)]$. Hence $\psi_B(p) = \psi_C(q)$, a contradiction. In particular, the residuated lattices $\m{A}$, $\m{B}$, and $\m{C}$ are integral and commutative.
\end{proof}

\begin{figure}
\centering

\begin{tikzpicture}[
fbul/.style={circle,draw=black,fill=black, minimum size = 4pt, inner sep = 0pt},
fcirc/.style={circle,draw=black, minimum size = 4pt, inner sep = 0pt},
square/.style={regular polygon,regular polygon sides=4},
fsqu/.style={square,draw=black,fill=black, minimum size = 5.5pt, inner sep = 0pt}
]
  \node[fbul] (top) at (0,-2.4) {};
  \node[fbul] (1) at (0,-3.2) {};
  \node[fbul] (bot) at (0,-4) {};
   
   \node[right] () at (top) {$\top$};
   \node[right] () at (1) {$1$};
   \node[right] () at (bot) {$\bot$};

   \node[below] (A) at (0,-4.5) {$\m A$};

  \draw  (top) -- (1) -- (bot);
\end{tikzpicture}
\qquad
\begin{tikzpicture}[
fbul/.style={circle,draw=black,fill=black, minimum size = 4pt, inner sep = 0pt},
fcirc/.style={circle,draw=black, minimum size = 4pt, inner sep = 0pt},
square/.style={regular polygon,regular polygon sides=4},
fsqu/.style={square,draw=black,fill=black, minimum size = 5.5pt, inner sep = 0pt}
]
   \node[fbul] (top) at (0,-2.4) {};
   \node[fbul] (a) at (0,-3.2) {};
   \node[fbul] (b) at (0.7,-4) {};
   \node[fbul] (1) at (-0.7,-4) {};
   \node[fbul] (bot) at (0,-4.8) {};
   
   \node[right] () at (top) {$\top = \top b$};
   \node[right] () at (a) {$a$};
   \node[right] () at (b) {$b =ab$};
   \node[left] () at (1) {$1$};
   \node[right] () at (bot) {$\bot$};

   \node[below] (B) at (0,-5.3) {$\m B$};
  
  \draw  (top) -- (a) -- (1) -- (bot) -- (b) -- (a);
\end{tikzpicture}
\begin{tikzpicture}[
fbul/.style={circle,draw=black,fill=black, minimum size = 4pt, inner sep = 0pt},
fcirc/.style={circle,draw=black, minimum size = 4pt, inner sep = 0pt},
square/.style={regular polygon,regular polygon sides=4},
fsqu/.style={square,draw=black,fill=black, minimum size = 5.5pt, inner sep = 0pt}
]
  \node[fbul] (top) at (0,-3.2) {};
  \node[fbul] (1) at (-0.7,-4) {};
   \node[fbul] (c) at (0.7,-4) {};
  \node[fbul] (bot) at (0,-4.8) {};
   
   \node[right] () at (top) {$\top$};
   \node[right] () at (c) {$c= \top c$};
   \node[left] () at (1) {$1$};
   \node[right] () at (bot) {$\bot$};
   
   \node[below] (C) at (0,-5.3) {$\m C$};

  \draw  (top) -- (c) -- (bot) -- (1) -- (top);
\end{tikzpicture}

\caption{
Counterexample for the AP in distributive idempotent residuated lattices (Theorem~\ref{thm4}).
}
\label{fig4}
\end{figure}

\begin{theorem}\label{thm4}
    The variety of (commutative) distributive idempotent residuated lattices does not have the amalgamation property.
\end{theorem}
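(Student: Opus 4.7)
The plan is to follow the template of Theorem~\ref{thm} and Theorem~\ref{thm3}: exhibit a span of finite (commutative) distributive idempotent residuated lattices and derive a contradiction from any supposed amalgam, combining a few identities forced by the outer algebras with the distributive law to produce a lattice relation the span forbids.

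First I would check that the algebras $\m{A}$, $\m{B}$, $\m{C}$ of Figure~\ref{fig4} are commutative distributive idempotent residuated lattices (all nodes are filled circles, so central and idempotent) and that the inclusions form a span. Suppose a distributive idempotent residuated lattice $\m{D}$ is an amalgam. As in the earlier proofs, one reduces to the case $A = B \cap C$: neither of the elements $a, b \in B \setminus A$ can be identified with $c$, since in $\m{B}$ we have $1 \leq a$ and $1 \jn b = a < \top$, whereas in $\m{C}$ we have $1 \not\leq c$ and $1 \jn c = \top$.

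The key computation takes place in $\m{D}$, using $\top b = \top$ from $\m{B}$ together with $\top = 1 \jn c$ and $c\top = c$ from $\m{C}$:
\[
\top = \top b = (1 \jn c)b = b \jn cb \leq b \jn c\top = b \jn c,
\]
so $\top = b \jn c$ in $\m{D}$. (Commutativity of $\m{D}$ is not needed; we use only that $b \leq \top$ together with identities inherited from the outer algebras.) Distributivity then yields $a = a \mt \top = a \mt (b \jn c) = b \jn (a \mt c)$, and meeting both sides with $1$, using $1 \leq a$ and $1 \mt b = \bot$ (from $\m{B}$), gives $1 = 1 \mt c$, i.e., $1 \leq c$, contradicting the incomparability of $1$ and $c$ in $\m{C}$.

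The main obstacle is the choice of the span: one must arrange that $\top = b \jn c$ is forced in any amalgam while $b < \top$ in $\m{B}$ and $1 \not\leq c$ in $\m{C}$, so that the distributive law then produces a forbidden lattice relation. Once the span is fixed, the calculation is a short variation on the distributive-lattice technique used in Theorem~\ref{thm3}, and no additional technical difficulty arises.
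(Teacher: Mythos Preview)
Your proof is correct and follows essentially the same approach as the paper: establish $\top = b \jn c$ in any amalgam $\m{D}$ from $\top b = \top$, $1 \jn c = \top$, and $c\top = c$, and then use distributivity to derive a forbidden lattice relation. The paper's final step is slightly more direct---meeting $\top = b \jn c$ with $1$ rather than with $a$ gives $1 = (1 \mt b) \jn (1 \mt c) = \bot \jn \bot = \bot$ immediately---but this is only a cosmetic difference.
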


\begin{proof}
    Consider the structures $\m{A}$, $\m{B}$, and $\m{C}$ depicted in Figure~\ref{fig2}. They are commutative distributive idempotent residuated lattices. It is straightforward to check that the inclusion maps $\m{A} \hookrightarrow \m{B}$ and $\m{A} \hookrightarrow \m{C}$ are embeddings, so they form a span in the variety of distributive idempotent  residuated lattices. Assume for a contradiction that this span has an amalgam in the variety of distributive idempotent residuated lattices. Since $c \jn 1 = \top$ and $b\jn 1 = a \jn 1 = a$,
    we may assume that $A = B\cap C$ and that there exists an distributive idempotent residuated lattice $\m{D}$ such that $\m{B} \leq \m{D}$ and $\m{C} \leq \m{D}$. Then, since $c\top = c$ and $\top b = \top$, in $\m{D}$ we have
    \[
    cb = c\top b = c\top = c.
    \]
    So, since $1 \jn c = \top$,
    \[
    \top = \top b = (1\jn c)b = b\jn cb = b \jn c.
    \]
    Now, since $\m D$ is distributive,
    \[
    1  = 1 \mt \top = 1 \mt (b \jn c) = (1 \mt b) \jn (1\mt c) = \bot \jn \bot = \bot,
    \]
    a contradiction.
\end{proof}

Table~\ref{tab:one} summarizes some of the amalgamation results for residuated lattices and marks in boldface which ones follow from our counterexamples. The superscripts denote where the results were proved and a `?' indicates that the problem is still open.

\begin{table}
    \centering
    \caption{Amalgamation in some varieties of residuated lattices} \label{tab:one}
    
    \begin{tabular}{lcc}\toprule
     \qquad \ \
          Variety
     &\qquad\ AP \qquad\ &\!\!\!\!\!\begin{tabular}{c}AP for commutative\\
     subvariety
     \end{tabular}\!\!\!\!\!\\ \midrule 
     \textbf Residuated \textbf Lattices&\textbf{no}\textsuperscript{Thm.~\ref{thm}}&yes\textsuperscript{\cite{GJKO07}} \rule{0pt}{2.6ex} \\
     \textbf Distributive \textbf{RL}&\textbf{no}\textsuperscript{Thm.~\ref{thm3}}&\textbf{no}\textsuperscript{Thm.~\ref{thm3}} \\ 
     \textbf Integral \textbf{RL}&?&yes\textsuperscript{\cite{GJKO07}} \\
     \textbf{Id}empotent \textbf{RL}&\textbf{no}\textsuperscript{Thm.~\ref{thm}}&yes\textsuperscript{\cite{Kamide02}} \\ 
     \textbf{In}volutive \textbf{RL}&\textbf{no}\textsuperscript{Thm.~\ref{thm2}}&yes\textsuperscript{\cite{GJKO07}} \\ 
     \textbf{DIRL}&\textbf{no}\textsuperscript{Thm.~\ref{thm3}}&\textbf{no}\textsuperscript{Thm.~\ref{thm3}}\\
     \textbf{DIdRL}&\textbf{no}\textsuperscript{Thm.~\ref{thm4}}&\textbf{no}\textsuperscript{Thm.~\ref{thm4}} \\ 
     \textbf{DInRL}&no\textsuperscript{\cite{RLR}}&no\textsuperscript{\cite{RLR}}\\
     \bottomrule 
    \end{tabular}
\end{table}

To find the counterexamples in Figure~\ref{fig}--\ref{fig4} we employed a Python program that uses Mace4 \cite{p9} to generate finite spans of residuated lattices and tries to complete them. This way we obtained possible counterexamples, including the ones in Figures~\ref{fig}--\ref{fig4}. 

However, we were not able to resolve the problem for integral residuated lattices. 

\begin{problem}
    Does the variety of integral residuated lattices have the amalgamation property?
\end{problem}

In \cite{Jac2000} Jackson shows that the problem of checking for a span of finite semigroups/rings whether it has a (strong) amalgam in the class of all (finite) semigroups/rings is undecidable. Although this result can be extended to lattice-ordered monoids, it is not clear if the same also holds for residuated lattices.

\begin{problem}
    Is the problem of checking whether a span of finite residuated lattices has an amalgam in the class of (finite) residuated lattices decidable? 
\end{problem}

\subsection*{Acknowledgment.}
The second author was supported by the Swiss National Science Foundation (SNSF), grant no. 200021\textunderscore215157.
\bibliographystyle{plain}

\end{document}